\newtheorem{theorem}{Theorem}
\newtheorem{lemma}[theorem]{Lemma}
\newtheorem*{theorem*}{Theorem}
\newtheorem*{corollary*}{Corollary}
\newtheorem*{lemma*}{Lemma}
\newtheorem*{prop*}{Proposition}
\newtheorem*{example*}{Example}
\theoremstyle{definition}
\newtheorem*{defn*}{Definition}
\newtheorem*{remark*}{Remark}
\newcommand{\floor}[1]{\left\lfloor #1 \right\rfloor}
\title{Asymptotic Properties of Maximal $p$-Core $p'$-Partitions}
\author{Sanjana Das}
\date{August 20, 2022}
\email{\href{mailto:sanjanad@mit.edu}{sanjanad@mit.edu}}
\subjclass[2020]{11P99, 05C12, 20C30}
\keywords{$p$-core partitions, $p'$-partitions, abaci, walks on graphs, character tables}
\begin{document}

\begin{abstract}
    For primes $p$, we study the maximal possible size of a $p$-core $p'$-partition (a partition with no hook lengths or parts divisible by $p$). McDowell recently proved that the maximum is attained by a unique partition, say $\Lambda_p$. Using his graph-theoretic description of $\Lambda_p$, we prove for $p > 10^6$ that \[\frac{1}{24}p^6 - p^5\sqrt{p} < |\Lambda_p| < \frac{1}{24}p^6 - \frac{1}{200}p^5\sqrt{p},\] which shows that $|\Lambda_p| \sim p^6/24$ as $p \to \infty$. 
\end{abstract}

\maketitle

\section{Introduction}

A partition is called \emph{$p$-core} if none of its hook lengths are divisible by $p$; a partition is called a \emph{$p'$-partition} (or \emph{$p$-regular}) if none of its parts are divisible by $p$. In this paper, we study the largest possible size of a $p$-core $p'$-partition. 

Interest in $p$-core $p'$-partitions arises from studying zeros in the character table of the symmetric group $S_n$. In particular, McSpirit and Ono \cite{MO22} studied entries of the character table indexed by two $p$-core partitions $\lambda$ and $\mu$. Using the result that $\chi_\lambda(\mu) = 0$ whenever $\mu$ has a part which is not a hook length in $\lambda$, they answered a question of McKay by studying pairs $(\lambda, \mu)$ of $p$-core partitions, where $\mu$ is also a $p'$-partition (see \cite[Theorem 1.3]{MO22}). For each prime $p$, they proved that there are finitely many $p$-core $p'$-partitions --- in particular, they showed that any such partition $\mu$ must satisfy \[|\mu| \leq \frac{1}{24}(p^6 - 2p^5 + 2p^4 - 3p^2 + 2p).\] Using this result, they concluded that for all sufficiently large $n$, any entry in the character table of $S_n$ indexed by two $p$-core partitions must be $0$, providing a lower bound for the number of zeros indexed by $p$-core partitions. 

The problem of determining the largest possible size of a $p$-core $p'$-partition was recently further studied by McDowell \cite{McD22}, who proved that any $p$-core $p'$-partition $\lambda$ must satisfy 
\begin{align}
    |\lambda| \leq \frac{1}{24}(p^6 - 4p^5 + 5p^4 + 12p^3 - 42p^2 + 52p - 24), \label{McD upper bound}
\end{align} 
and that there exists a $p$-core $p'$-partition $\lambda$ of size \[|\lambda| = \frac{1}{96}(p^6 + 6p^4 - 12p^3 + 89p^2 - 120p - 48).\] Furthermore, McDowell obtained an elegant graph-theoretic criterion describing the maximal $p$-core $p'$-partition (see \cite[Theorem 4.8]{McD22}), and showed that such a partition is unique for every odd prime $p$. We denote this unique partition by $\Lambda_p$. 

In view of these results, it is natural to study the asymptotics as $p \to \infty$ of $|\Lambda_p|$. Our work makes use of McDowell's description to improve the upper and lower bounds above. Our main result explicitly restricts $|\Lambda_p|$ to a narrow interval around $\frac{1}{24} p^6$.  

\begin{theorem} \label{main theorem}
    For all primes $p > 10^6$, we have \[\frac{1}{24}p^6 - p^5\sqrt{p} < |\Lambda_p| < \frac{1}{24}p^6 - \frac{1}{200}p^5\sqrt{p}.\] 
\end{theorem}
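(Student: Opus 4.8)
The plan is to pass to the abacus (charge-vector) model for $p$-core partitions and reduce the computation of $|\Lambda_p|$ to an explicit extremal problem, which I then solve to leading order and control to the required precision. Recall that a $p$-core partition $\lambda$ is encoded by a charge vector $\mathbf c = (c_0, \dots, c_{p-1}) \in \mathbb Z^p$ with $\sum_i c_i = 0$, and that its size is given by the standard quadratic formula
\[
|\lambda| = \frac{p}{2}\sum_{i=0}^{p-1} c_i^2 + \sum_{i=0}^{p-1} i\,c_i .
\]
For the maximizer the charges turn out to be $O(p^2)$, so the linear term is only $O(p^4)$, which is far below the error scale $p^5\sqrt p = p^{11/2}$; thus the leading behaviour is governed by $\tfrac p2 \sum_i c_i^2$, and I will need $\sum_i c_i^2$ to precision $O(p^4\sqrt p)$.

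First I would translate McDowell's graph-theoretic description of $\Lambda_p$ (\cite[Theorem 4.8]{McD22}) into a description of its charge vector. The $p'$ condition forbids any part of $\lambda$ from being divisible by $p$, which on the abacus is a constraint on the bead--gap pattern; McDowell's criterion recasts ``maximal subject to this constraint'' as the selection of an extremal walk on his graph. Reading off the charges from that walk, I expect $\Lambda_p$ to correspond to a vector whose increments $c_{i+1}-c_i$ are as large as the walk permits, so that $\mathbf c$ traces out (approximately) a single symmetric tent: decreasing to a minimum of order $-p^2$ and then increasing back, with $\sum_i c_i = 0$. This pins down the extremal profile and the main term.

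Next I would solve the continuous relaxation of the resulting problem: maximize $\tfrac p2\sum_i c_i^2$ over admissible walk profiles. The optimal continuous profile is the triangle wave just described, and evaluating the quadratic form on it yields the leading term $\tfrac1{24}p^6$; I would check that both the linear term and the curvature of the true discrete optimum about this profile contribute only at lower order, giving the common main term in both inequalities of Theorem~\ref{main theorem}.

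The main obstacle --- and the entire content of the $p^5\sqrt p$ error bars --- is the second-order analysis: bounding the gap between the continuous optimum $\tfrac1{24}p^6$ and the actual maximum achievable by an integer walk obeying McDowell's constraints. For the upper bound I would show that every admissible walk loses at least $\tfrac1{200}p^5\sqrt p$ relative to the continuous optimum, by quantifying how far a lattice walk with the permitted step set must deviate from the ideal linear slopes; since the objective is quadratic, a per-step deviation on the scale forced by the constraints accumulates over the $\Theta(p)$ steps into a defect of order $p^5\sqrt p$. For the lower bound I would exhibit an explicit admissible walk --- refining McDowell's construction, whose leading coefficient $\tfrac1{96}$ is a factor of four too small --- that tracks the triangle-wave profile closely enough to lose at most $p^5\sqrt p$, and compute its size directly. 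Separating the constants $1$ and $\tfrac1{200}$ in front of $p^5\sqrt p$ is exactly where the fine combinatorics of McDowell's graph enters, and is the step I expect to demand the most care; the hypothesis $p > 10^6$ should then absorb all the remaining lower-order error terms.
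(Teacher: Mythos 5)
Your outline correctly identifies the abacus/quadratic size formula and gets the leading term $\frac{1}{24}p^6$ from the right heuristic profile, but there is a genuine gap at precisely the step you flag as hardest, and your proposed route to it would not work. McDowell's characterization (Theorems \ref{Lambda characterization} and \ref{longest walk characterization}) does not merely constrain the walk --- it determines the unique optimum exactly: for each $1 \leq i \leq p-2$ the segment consists of $x_i^{\max} - x_i$ edges labelled $i$ followed by $y_i^{\max} - y_i$ edges labelled $i+1$, where $(x_i, y_i)$ is the minimal-$(x+y)$ positive solution of $ix + (i+1)y \equiv 0 \pmod{p}$ subject to $x \leq x_i^{\max}$, $y \leq y_i^{\max}$. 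So there is no continuous relaxation to round and no walk to ``exhibit'' (your plan to refine McDowell's $\frac{1}{96}$-coefficient example misreads it: that is only his constructive lower bound, while his Theorem 4.6/4.8 already pin down the maximizer); the entire theorem reduces to two-sided estimates on the total deficit $c = \sum_i (x_i + y_i)$. The $\sqrt{p}$ scale then comes from an arithmetic mechanism your sketch never identifies: a Dirichlet-box argument (Lemma \ref{fraction representations}) shows each $i$ satisfies $i \equiv -\frac{s}{r+s}$ or $i \equiv \frac{s}{r-s} \pmod{p}$ with coprime $0 < r, s < \sqrt{p}$, essentially uniquely, and correspondingly $x_i + y_i \leq r+s$ or $\frac{p}{m} < x_i + y_i < \frac{p}{m} + m$ with $m = \max(r,s)$; summing over the $2\varphi(m)$ coprime pairs for each $m$ and using $\sum_{m \leq n} \varphi(m)/m > \frac{3}{5}n - 6$ gives $\frac{6}{5}p\sqrt{p} - 16p < c < \frac{11}{3}p\sqrt{p}$. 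Without this congruence analysis the constants $1$ and $\frac{1}{200}$ are unobtainable, and your upper- and lower-bound claims remain assertions of the goal rather than proofs.

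Second, your central heuristic --- ``the objective is quadratic, so per-step deviations accumulate over $\Theta(p)$ steps into a defect of order $p^5\sqrt{p}$'' --- fails as stated, for two reasons. The walk has $\Theta(p^2)$ steps, not $\Theta(p)$; and, more seriously, writing $b_i = ip - c_i$ with $c_i$ the partial deficit sums, the first-order effect of $c$ cancels exactly between the two quadratic terms of the size formula (Lemma \ref{size from bi}): the term $-\frac{1}{2}\bigl(\sum_i b_i\bigr)^2$ contributes $+\frac{1}{4}p^4 c$ while $\frac{p}{2}\sum_i b_i^2$ contributes $-\frac{1}{4}p^4 c$, so a bound on the total deficit alone yields no gain at order $p^5\sqrt{p}$. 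To extract the upper bound the paper must prove positional information about where the deficit accrues --- $c_{\lfloor p/18 \rfloor} < \frac{2}{5}p\sqrt{p} + p$ (Lemma \ref{silly 18 lemma}), hence $c - 2c_i > \frac{2}{5}p\sqrt{p} - 18p$ for $i < \frac{p}{18}$ --- and feed it through the symmetry $c_i + c_{p-1-i} = c$ to recover a surviving $-\frac{1}{120}p^5\sqrt{p}$. Your proposal contains no analogue of this step, and without it even a correct bound on $c$ would not close either inequality.
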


\begin{remark*}
    The range $p > 10^6$ was chosen so that the constants in the theorem above are nice. 
\end{remark*}

This paper is organized as follows. First, we describe the characterization given by McDowell, which represents $\Lambda_p$ in terms of the longest walk on the additive residue graph $\mathcal{G}_p$. The challenge is to recover the size of $\Lambda_p$ from this description. After recalling this criterion, we establish a few lemmas about the sizes of individual pieces of this longest walk. Finally, we combine these lemmas to establish the upper and lower bounds on $|\Lambda_p|$. 

\subsection*{Acknowledgements}

The author would like to thank Ken Ono and Eleanor McSpirit for their guidance, and Eoghan McDowell for helpful comments on an earlier version of this article. The author also thanks the anonymous referees and Kathrin Bringmann for further suggestions that improved the presentation of this article. The author was a participant in the 2022 UVA REU in Number Theory, and is grateful for the support of grants from the National Science Foundation
(DMS-2002265, DMS-2055118, DMS-2147273), the National Security Agency (H98230-22-1-0020), and the Templeton World Charity Foundation.

\section{A Graph-Theoretic Interpretation and Proof of Theorem \ref{main theorem}}

\subsection{Abacus Notation and a Graph-Theoretic Interpretation}

To describe $p$-core $p'$-partitions, we use \emph{abacus notation} on a $p$-abacus, as described in \cite[Chapter 2]{JK84}. The abacus has $p$ vertical runners, labelled $0$ through $p - 1$; positions on the abacus are ordered from left to right and top to bottom (starting with position $0$), so that the $i$th row on runner $j$ is position $(i - 1)p + j$. 

On this abacus, we place some \emph{beads}, and call positions without beads \emph{gaps}; we require position $0$ to be a gap. Any such abacus corresponds to the partition where each bead contributes a part equal to the number of gaps preceding the bead. Under this correspondence, every partition can be represented by a unique configuration on the abacus. 

\begin{lemma}[{\cite[Lemma 2.7.13]{JK84}}] \label{top alignment}
    A partition is $p$-core if and only if all beads of its corresponding abacus are at the top of their runners (in other words, there is no bead with a gap above it). 
\end{lemma}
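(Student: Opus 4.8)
The plan is to pass through the classical correspondence between hook lengths and differences of bead and gap positions on the abacus, which converts the divisibility condition defining a $p$-core into a statement about runners.

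First I would pin down the bead configuration explicitly. The requirement that position $0$ be a gap, together with the reading rule that each bead contributes a part equal to the number of preceding gaps, forces the number of beads to equal the number of positive parts of $\lambda$: writing $\lambda = (\lambda_1 \ge \cdots \ge \lambda_r > 0)$, the beads sit precisely at the positions $\beta_i = \lambda_i + r - i$ for $1 \le i \le r$. These are distinct since the $\beta_i$ strictly decrease, the smallest one $\beta_r = \lambda_r \ge 1$ leaves position $0$ empty, and the bead at $\beta_i$ is preceded by exactly $\beta_i - (r-i) = \lambda_i$ gaps, so this configuration really does represent $\lambda$; any configuration with more beads would place one at position $0$, so it is the unique one. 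This identifies the set of bead positions with the first-column hook lengths (the $\beta$-numbers) of $\lambda$.

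Second, I would establish the key combinatorial fact that the multiset of all hook lengths of $\lambda$ equals $\{\beta_i - g : 1 \le i \le r,\ g \text{ a gap},\ g < \beta_i\}$. The cleanest route is to argue row by row: the bead at $\beta_i$ corresponds to row $i$, which has $\lambda_i = \beta_i - (r-i)$ cells, and there are likewise exactly $\beta_i - (r-i)$ gaps at positions below $\beta_i$; one then checks that the hook lengths occurring in row $i$ are precisely the differences $\beta_i - g$ as $g$ ranges over those gaps, yielding a bijection between the cells of row $i$ and the gaps below $\beta_i$. This is the heart of the argument and the step I expect to require the most care, although it is entirely standard and could instead be quoted directly from \cite{JK84}.

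Finally, I would translate divisibility into runner language. A position lies on runner $j$ exactly when it is congruent to $j$ modulo $p$, so a hook length $\beta_i - g$ is divisible by $p$ if and only if the bead $\beta_i$ and the gap $g$ lie on the same runner. Consequently $\lambda$ fails to be $p$-core precisely when some bead has a gap above it (at a smaller position) on its own runner; equivalently, $\lambda$ is $p$-core if and only if on every runner all beads lie above all gaps, which is exactly the top-alignment condition asserted. Taking the contrapositive of this existence statement delivers both implications simultaneously.
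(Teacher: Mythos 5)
Your proof is correct: the paper itself states this lemma without proof, citing \cite[Lemma 2.7.13]{JK84}, and your argument --- identifying the bead positions with the $\beta$-numbers $\beta_i = \lambda_i + r - i$ (using the position-$0$-is-a-gap convention to get uniqueness), invoking the standard fact that the hook lengths in row $i$ are exactly the differences $\beta_i - g$ over gaps $g < \beta_i$, and observing that $p \mid (\beta_i - g)$ precisely when the bead and gap share a runner --- is the classical argument underlying that citation. The one step you rightly flag as requiring care (the bijection between the cells of row $i$ and the gaps below $\beta_i$) is the real content of the James--Kerber result, so quoting it as you propose is legitimate, and your write-up is otherwise complete.
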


Given any $p$-core partition, for each $1 \leq i \leq p - 1$, we define the $i$th \emph{bead multiplicity} $b_i$ to be the number of beads on runner $i$. By Lemma \ref{top alignment}, the list of bead multiplicities uniquely determines a $p$-core partition. 

Given the bead multiplicities of a partition, it is possible to explicitly compute the size of the partition. 

\begin{lemma}[{\cite[Lemma 2.2]{McD22}}] \label{size from bi}
    If $\lambda$ is a $p$-core partition with bead multiplicities $(b_1, b_2, \ldots, b_{p - 1})$, then \[|\lambda| = -\frac{1}{2}\left(\sum_{i = 1}^{p - 1} b_i\right)^2 + \frac{p}{2}\sum_{i = 1}^{p - 1} b_i^2 + \sum_{i = 1}^{p - 1} \left(i - \frac{p - 1}{2}\right)b_i.\] 
\end{lemma}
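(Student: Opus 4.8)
The plan is to compute the size of a $p$-core partition directly from its abacus configuration by summing the contributions of each bead, where a bead's contribution equals the number of gaps strictly preceding it. I would set up coordinates so that the bead in row $r$ (with $r \geq 1$) on runner $i$ occupies abacus position $(r-1)p + i$. Since the partition is $p$-core, Lemma~\ref{top alignment} guarantees the $b_i$ beads on runner $i$ sit in rows $1$ through $b_i$, occupying positions $i, p+i, 2p+i, \ldots, (b_i - 1)p + i$. The total size is then $\sum_{\text{beads}} (\text{number of gaps before that bead})$, and the main task is to rewrite this sum in closed form in terms of the $b_i$.

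The key step is to count, for a bead at position $N$, the number of gaps in positions $0, 1, \ldots, N-1$. The number of gaps before position $N$ equals $N$ minus the number of beads before position $N$. Summing $N$ over all beads is straightforward: the beads on runner $i$ contribute $\sum_{r=1}^{b_i}\bigl((r-1)p + i\bigr) = i\,b_i + p\binom{b_i}{2}$. Summing the "number of earlier beads" over all beads is, by a standard double-counting argument, exactly $\binom{B}{2}$ where $B = \sum_i b_i$ is the total number of beads, since each unordered pair of beads is counted once. Thus $|\lambda| = \sum_i\bigl(i\,b_i + p\binom{b_i}{2}\bigr) - \binom{B}{2}$, and it remains to expand these binomial coefficients and collect terms.

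Carrying out the expansion, $p\sum_i \binom{b_i}{2} = \frac{p}{2}\sum_i b_i^2 - \frac{p}{2}\sum_i b_i$ and $\binom{B}{2} = \frac{1}{2}B^2 - \frac{1}{2}B = \frac{1}{2}\bigl(\sum_i b_i\bigr)^2 - \frac{1}{2}\sum_i b_i$. Substituting and combining the linear terms, the $-\frac{p}{2}\sum_i b_i$ from the first expansion and the $+\frac{1}{2}\sum_i b_i$ from $\binom{B}{2}$ merge with $\sum_i i\, b_i$ to give $\sum_i\bigl(i - \frac{p-1}{2}\bigr) b_i$, while the quadratic pieces give exactly $\frac{p}{2}\sum_i b_i^2 - \frac{1}{2}\bigl(\sum_i b_i\bigr)^2$. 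This reproduces the stated formula.

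I do not expect a serious obstacle here, as the argument is essentially bookkeeping once the bead positions are fixed. The one point requiring a little care is the double-counting identity that the total number of (bead, earlier-bead) incidences equals $\binom{B}{2}$; this uses crucially that \emph{every} pair of distinct beads has a well-defined order on the abacus, so the count is independent of the runner structure. An alternative, if one prefers to avoid the global pairing argument, is to compute the gaps runner-by-runner using Lemma~\ref{top alignment} to control exactly which positions below each bead are occupied, but the aggregate double-count is cleaner and avoids tracking cross-runner interactions term by term.
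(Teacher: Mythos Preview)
Your argument is correct: computing $|\lambda|$ as $\sum_{\text{beads}} N - \binom{B}{2}$, with $\sum_{\text{beads}} N = \sum_i\bigl(i\,b_i + p\binom{b_i}{2}\bigr)$, and then expanding is exactly right, and the algebra checks out. Note, however, that the paper does not actually supply a proof of this lemma---it is quoted (as a minor reformulation) from \cite[Lemma~2.2]{McD22}---so there is no in-paper argument to compare yours against; your derivation is the standard one and would serve as a self-contained proof.
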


\begin{remark*}
    The lemma above is a minor reformulation of \cite[Lemma 2.2]{McD22}. 
\end{remark*}

It is possible to further describe the abacus configuration corresponding to a maximal $p$-core $p'$-partition. 

\begin{lemma}[{\cite[Lemma 2.3]{McD22}}] \label{right alignment}
    The abacus corresponding to any maximal $p$-core $p'$-partition has all beads at the right end of their rows (in other words, there is no bead with a gap to its right). 
\end{lemma}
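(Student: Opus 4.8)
The plan is to prove Lemma~\ref{right alignment} (the ``right alignment'' lemma) by a local exchange argument showing that if a maximal $p$-core $p'$-partition's abacus had any bead with a gap immediately to its right, we could perform a modification that either increases the size or violates maximality in a way that contradicts the $p'$-condition. The key structural fact I would exploit is the description from Lemma~\ref{size from bi}: the size depends on the bead multiplicities through the convex quantity $\tfrac{p}{2}\sum b_i^2 - \tfrac12(\sum b_i)^2$ plus a linear term $\sum (i - \tfrac{p-1}{2}) b_i$. Since all beads sit at the tops of their runners (Lemma~\ref{top alignment}), a configuration is determined by the multiplicities $(b_1,\dots,b_{p-1})$, and ``a bead with a gap to its right'' corresponds precisely to having $b_i < b_{i+1}$ for some $i$ when we read runners in the appropriate order, or more carefully to a bead on runner $j$ in a row where runner $j+1$ is empty.

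First I would set up the correspondence between the geometric statement (no bead has a gap to its right) and an inequality on the bead multiplicities: a bead on runner $j$ at row $r$ has a gap to its right exactly when runner $j+1$ has fewer than $r$ beads, i.e.\ $b_{j+1} < b_j$ (treating the boundary runners $0$ and $p-1$ with care, since runner $0$ must keep position $0$ a gap). So ``right alignment'' should be equivalent to the monotonicity condition $b_1 \le b_2 \le \cdots \le b_{p-1}$, or its reverse depending on orientation conventions. Establishing this dictionary cleanly is the first real task, because the indexing on the abacus and the direction in which ``right'' increases the multiplicity must be pinned down exactly to match the linear term in Lemma~\ref{size from bi}.

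Next I would argue by contradiction: suppose the maximal partition $\lambda$ has two adjacent runners $j, j+1$ with a bead on runner $j$ and a gap to its right on runner $j+1$, so $b_{j+1} < b_j$ violates the desired monotonicity. I would consider the ``swap'' that moves that rightmost problematic bead one step to the right (increasing $b_{j+1}$ by one and decreasing $b_j$ by one), or more robustly the operation of transposing the two runners' multiplicities, and compute the change in $|\lambda|$ using Lemma~\ref{size from bi}. Because $\sum b_i$ and $\sum b_i^2$ are both invariant under a transposition of two multiplicities, the quadratic part is unchanged, and only the linear term $\sum (i - \tfrac{p-1}{2}) b_i$ moves; the sign of its change is controlled by which of $b_j, b_{j+1}$ is larger and by the sign of $(i - \tfrac{p-1}{2})$. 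The point is to show that reordering the multiplicities into the monotone arrangement dictated by the coefficients $i - \tfrac{p-1}{2}$ strictly increases $|\lambda|$ unless the configuration is already in that order, which would contradict maximality of $\lambda$.

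The main obstacle I anticipate is ensuring that the modified abacus still corresponds to a legitimate $p$-core $p'$-partition, rather than merely a larger $p$-core partition. Increasing a partition's size by rearranging beads is easy from the size formula, but the rearranged configuration must still have \emph{no part divisible by $p$}, which is a genuine constraint on which runners can carry how many beads and is not automatically preserved under an arbitrary transposition. I would therefore need to verify that the sorting operation respects the $p'$-condition --- most likely by checking that the set of part sizes modulo $p$ is governed by the runner labels in a way that a monotone rearrangement cannot introduce a part divisible by $p$, or by invoking the precise characterization of which multiplicity vectors yield $p'$-partitions. Reconciling the ``increase the size'' exchange argument with the rigidity of the $p'$-constraint is where the real content of the lemma lies, and I would expect the proof to hinge on showing that the maximal $p'$-configuration is forced to be the monotone (right-aligned) one because any inversion could be locally corrected without leaving the class of $p$-core $p'$-partitions.
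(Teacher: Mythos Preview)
The paper does not prove this lemma; it is quoted from \cite[Lemma~2.3]{McD22} without argument, so there is no in-paper proof to compare your proposal against.

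On the substance: your dictionary is correct --- for a $p$-core abacus, ``no bead with a gap to its right'' is equivalent to $b_1 \le b_2 \le \cdots \le b_{p-1}$ --- and your rearrangement computation is correct as well: since the quadratic part of Lemma~\ref{size from bi} is symmetric in the $b_i$ and the linear coefficients $i-\tfrac{p-1}{2}$ are strictly increasing, sorting the $b_i$ into increasing order strictly increases $|\lambda|$ whenever an inversion $b_j>b_{j+1}$ exists. But the obstacle you flag is not a formality; it is a genuine failure of this line of attack. For $p=3$, the multiplicity vector $(b_1,b_2)=(2,1)$ yields the $3$-core $3'$-partition $(2,1,1)$, while the sorted vector $(1,2)$ yields $(3,1,1)$, which is $3$-core but has a part divisible by $3$. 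Thus the hope that ``a monotone rearrangement cannot introduce a part divisible by $p$'' is simply false, and neither a single adjacent swap nor a full sort stays inside the class of $p'$-partitions in general. This does not contradict the lemma --- $(2,1,1)$ is not maximal --- but it does show that your proposed mechanism (produce a larger $p$-core $p'$-partition by sorting) cannot work without further input. A correct argument must supply an additional idea: either a different local move that controls the residues of the affected parts, or, when the sorted partition fails to be $p'$, an explicit construction of some \emph{other} strictly larger $p$-core $p'$-partition. Your sketch does not provide such an idea, so the gap you yourself identify is real and unfilled.
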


We call an abacus \emph{aligned} if it has the properties given in Lemmas \ref{top alignment} and \ref{right alignment}, i.e., if all beads are topmost in their runners and rightmost in their rows. Given any aligned abacus, in a row with $i$ gaps followed by $p - i$ beads, all beads will contribute a part of the same size; this size is exactly $i$ greater than the parts contributed by the beads in the preceding row. 

McDowell provided a graph-theoretic interpretation of this characterization. Consider the \emph{additive residue graph} $\mathcal{G}_p$, where the vertices are the residues modulo $p$, and for each residue $x$ and every $1 \leq i \leq p - 1$, there is an edge from $x$ to $x + i$ modulo $p$, labelled $i$. Then every aligned abacus corresponds to a walk on the additive residue graph, where we start at $0$, and for every row of $i$ gaps and $p - i$ beads, we walk along the unique edge labelled $i$ from the current vertex. The labels of the edges on this walk must be nondecreasing; conversely, any walk on $\mathcal{G}_p$ starting at $0$ which has nondecreasing edge labels corresponds to a unique aligned abacus. 

For every row of the abacus, the parts contributed by this row modulo $p$ are the residue visited at the end of its corresponding edge. In particular, the partition corresponding to an aligned abacus is a $p'$-partition if and only if the walk never returns to $0$. In light of this property, we call a walk on $\mathcal{G}_p$ \emph{valid} if it begins at $0$, has nondecreasing edge labels, and never returns to $0$. 

McDowell proved the following interpretation of maximal $p$-core $p'$-partitions in terms of walks on $\mathcal{G}_p$. 

\begin{theorem}[{\cite[Theorem 4.8]{McD22}}] \label{Lambda characterization}
    There is a unique $p$-core $p'$-partition of maximal size, which corresponds to the unique longest valid walk on $\mathcal{G}_p$. 
\end{theorem}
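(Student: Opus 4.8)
The plan is to establish the claim in two halves: existence and uniqueness of a maximal valid walk (equivalently, a maximal $p$-core $p'$-partition), and the correspondence between maximal size and maximal walk length. I would first argue that there are only finitely many valid walks, so a longest one exists. This follows because a valid walk has nondecreasing edge labels drawn from $\{1, \ldots, p-1\}$ and never revisits $0$; since each label $i$ can be used only finitely many times before the walk is forced back to $0$ (or off the constraint of nondecreasing labels), the set of valid walks is finite. Correspondingly, the set of $p$-core $p'$-partitions is finite, as already noted in the introduction, so a maximal-size one exists.

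The heart of the proof is to show that maximizing $|\lambda|$ is the same as maximizing the length of the corresponding walk, and that the optimum is unique. For this I would use the description following Lemma~\ref{right alignment}: in an aligned abacus, each successive row contributes parts that are strictly larger than those of the previous row (by exactly the number of gaps in that row), and every bead in a given row contributes an equal part. The key monotonicity observation is that extending a valid walk by one more edge strictly increases the size of the partition, because it appends a new row whose beads all contribute strictly positive additional parts on top of everything below. Thus a partition whose walk is a proper prefix of another's walk must be strictly smaller. This should let me reduce the optimization to walks of maximal length, and then among those argue that the longest valid walk is itself unique.

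To pin down uniqueness of the longest walk, I expect the argument to hinge on the greedy/forced structure of extending valid walks under the nondecreasing-label constraint. Concretely, I would analyze when a valid walk can be extended: from the current vertex $x$, using label $i$ (at least as large as the last label used) leads to $x + i$, and validity requires $x + i \not\equiv 0$. The claim should be that the longest valid walk is obtained by a canonical greedy process, and that any deviation produces a strictly shorter walk; translating this into the partition side via Lemma~\ref{size from bi}, the bead multiplicities of the maximizer are forced, giving uniqueness. This is essentially repackaging McDowell's combinatorial analysis, so I would lean on the structure of $\mathcal{G}_p$ rather than redo the size computation from scratch.

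The main obstacle I anticipate is the uniqueness claim, both for the partition and for the walk. Existence is routine once finiteness is in hand, and the "longer walk $\Rightarrow$ larger partition" direction follows cleanly from the row-by-row additive structure of the aligned abacus. But showing that no two distinct valid walks can simultaneously achieve the maximal length -- and that length-maximization coincides exactly with size-maximization rather than merely correlating with it -- requires the careful forced-extension argument. I would expect the subtlety to lie in ruling out "ties," where two different label sequences of the same length yield genuinely different partitions; resolving this likely needs the precise arithmetic of which residues can be avoided modulo $p$ under nondecreasing labels, which is exactly the content of McDowell's Theorem~\ref{Lambda characterization} that we are invoking.
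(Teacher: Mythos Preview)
The paper does not prove this statement; it is quoted from McDowell \cite{McD22} (Theorem~4.8 there) and used as a black box, so there is no proof in the present paper to compare your proposal against.

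That said, your sketch has a genuine gap independent of this. Your prefix argument (``extending a valid walk by one edge strictly increases the partition size'') is correct, but it only shows that a size-maximizing partition must correspond to a \emph{non-extendable} valid walk, not to a walk of maximal \emph{length}. There can be many non-extendable valid walks of different lengths, and because the size formula in Lemma~\ref{size from bi} is nonlinear in the bead multiplicities, there is no a priori reason the longest of these yields the largest partition; two walks that are not prefixes of one another are simply not comparable by your argument. Your own final paragraph concedes this and proposes to resolve the tie-breaking by invoking ``exactly the content of McDowell's Theorem~\ref{Lambda characterization},'' which is the statement you are trying to prove --- that is circular. A genuine proof requires the structural analysis carried out in \cite{McD22}, which simultaneously pins down the unique longest walk (via the forced visits to the vertex $p-1$ recorded here as Theorem~\ref{longest walk characterization}) and shows that any other aligned abacus gives a strictly smaller partition; your proposal gestures at this mechanism but does not supply it.
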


Furthermore, McDowell also proved a characterization of the longest valid walk on $\mathcal{G}_p$. 

\begin{theorem}[{\cite[Theorem 4.6]{McD22}}] \label{longest walk characterization}
    There is a unique longest valid walk on $\mathcal{G}_p$, and for every $1 \leq i \leq p - 1$, this walk must have an edge labelled $i$ incident to the vertex $p - 1$. In other words, the longest walk must consist of the following segments:
    \begin{enumerate}
        \item Starting from $0$, take $p - 1$ edges labelled $1$ to reach $p - 1$. 
        \item For each $1 \leq i \leq p - 2$, starting from $p - 1$, take some number (possibly $0$) of edges labelled $i$, and then some number (possibly $0$) of edges labelled $i + 1$, to return to $p - 1$ (without reaching $0$). 
        \item Starting from $p - 1$, take $p - 2$ edges labelled $p - 1$ to reach $1$. 
    \end{enumerate}
\end{theorem}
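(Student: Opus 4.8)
The plan is to encode a valid walk by its \emph{phases} and then isolate the extremal role of the vertex $p-1\equiv -1\pmod p$. Since the labels are nondecreasing, a valid walk is determined by the counts $a_1,\dots,a_{p-1}$, where $a_i$ is the number of edges labelled $i$; writing $r_0=0$ and $r_i=r_{i-1}+i\,a_i\bmod p$ for the residue reached after the label-$i$ edges, validity says precisely that no intermediate partial sum vanishes. The computational engine is an elementary \emph{per-phase bound}: from a residue $r\neq 0$, a run of label-$i$ edges first returns to $0$ after exactly $k=(-r)\,i^{-1}\bmod p$ edges, so a phase contains at most $k-1$ label-$i$ edges, and a phase of maximal length $k-1$ ends at residue $-i$. (Equivalently, one may lift the walk to a strictly increasing integer sequence with nondecreasing gaps in $[1,p-1]$ that avoids all multiples of $p$; each phase is then an arithmetic progression that must step over, but never land on, the multiples of $p$.) Existence of a longest walk is immediate from finiteness, so the entire content is the forced structure and uniqueness.

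I would first pin down the two end segments. The final phase uses label $p-1$, i.e.\ steps of $-1$, and from entry residue $r$ it contributes exactly $r-1$ edges before reaching $1$; this is largest, giving $p-2$ edges, precisely when the phase is entered at $r=p-1$, which is segment (3). At the other end, a walk must actually use label $1$ (skipping it forever only shortens the walk), and from $0$ the longest label-$1$ run has $p-1$ edges and lands exactly at $p-1$; taking all of them both maximizes the opening phase and places the walk at the ideal launching residue, giving segment (1). The reversal duality makes these two analyses symmetric: reversing the edge order of a valid walk replaces each label $i$ by $p-i$, leaves the label sequence nondecreasing, and produces a walk that \emph{ends} at $0$, so the first-phase and last-phase arguments are literally mirror images. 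With both ends forced to touch $p-1$, the crux is the central claim: \textbf{in any longest valid walk, the vertex $p-1$ is incident to an edge of every label $i$.}

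The heart of the proof is an exchange/extremal step establishing this claim. The approach is to assume some label $i$ has no edge incident to $p-1$, locate the (contiguous) run of label-$i$ edges together with its immediate neighbours, and re-route this block so that it passes through $p-1$, using the per-phase bound to verify the re-routed walk is valid and strictly longer, contradicting maximality. Granting the claim, each label-$i$ run meets $p-1$, and the grouping of equal labels together with the per-phase bound forces $p-1$ to occur at the \emph{boundary} of each run; consecutive runs then share the vertex $p-1$, the walk returns to $p-1$ repeatedly, and the stretch between two successive returns uses only two consecutive labels (a stretch spanning three labels would leave the middle label's run entirely inside an excursion that misses $p-1$, contradicting the claim). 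This is exactly the loop description in segment (2). Uniqueness follows last: the per-phase bound, the requirement that each loop return to $p-1$ without vanishing, and the fact that every label must be used together determine each $a_i$, hence each loop length, uniquely.

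I expect the exchange step to be the main obstacle, because the phases are globally coupled and so cannot be treated one at a time. Greedily maximizing an early phase ends it at residue $-i$, which can strand the walk where later phases are short; a direct check for small primes shows the optimum deliberately stops a phase \emph{before} exhausting it in order to enter a later phase at $p-1$, so the comparison must be between whole walks rather than a local greedy choice. Quantifying the gain from re-routing through $p-1$ uniformly over all the ways incidence can fail is the delicate part, and here the reversal symmetry should reduce the casework by constraining the endpoints (in particular forcing the walk to terminate at residue $1$) and letting the initial and terminal failures be handled on the same footing.
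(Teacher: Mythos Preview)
This theorem is not proved in the present paper at all: it is quoted verbatim from McDowell \cite[Theorem~4.6]{McD22} and used as a black box, so there is no proof here to compare your proposal against. Any assessment of your argument must therefore stand on its own, not relative to this paper.

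As a standalone argument, your proposal is a reasonable outline but contains a genuine and self-acknowledged gap. The per-phase bound is correct, and the derivation of the loop structure in~(2) \emph{from} the central claim (every label is incident to $p-1$) is sound. However, the central claim itself is the entire content of the theorem, and you do not actually prove it: you describe an exchange step (``re-route this block so that it passes through $p-1$'') and then immediately concede that this is ``the main obstacle'' and ``the delicate part,'' without carrying it out. This is not a minor omission---as you yourself note, the phases are globally coupled and the greedy argument fails, so one cannot simply say rerouting lengthens the walk; one must produce an explicit comparison walk and verify it is both valid and strictly longer in every case where incidence fails. Your proposal does not supply this. The preliminary claims also need work: ``a walk must actually use label~$1$ (skipping it forever only shortens the walk)'' is asserted but not argued, and the justification for segment~(1) leans on the same greedy intuition you later correctly disavow. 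Finally, the uniqueness claim at the end (``the per-phase bound \ldots together determine each $a_i$'') is too fast: a given two-label loop from $p-1$ back to $p-1$ can have several $(a_i,a_{i+1})$ realisations of different lengths, and one must show the longest such realisation is unique, which is a separate (if easier) computation.
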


\begin{example*}
    The maximal $3$-core $3'$-partition is $\Lambda_3 = (4, 2, 2, 1, 1)$, which corresponds to the following abacus. 
    \begin{center}
        \begin{tabular}{ccc}
            $0$ & $1$ & $2$ \\
            \hline
            $\cdot$ & $\circ$ & $\circ$ \\
            $\cdot$ & $\circ$ & $\circ$ \\
            $\cdot$ & $\cdot$ & $\circ$ \\
        \end{tabular}
    \end{center} 
    The corresponding walk on $\mathcal{G}_3$ is $(0, 1, 2, 1)$, consisting of two edges labelled $1$ and one edge labelled $2$. 

    \begin{center}
        \begin{tikzpicture}
            \node (0) at (90:1) {$0$};
            \node (1) at (210:1) {$1$};
            \node (2) at (330:1) {$2$};
            \draw [->, blue] (0) -- (1) node [midway, above left] {$1$};
            \draw [->, blue, out=15, in=165] (1) to node [midway, above] {$1$} (2);
            \draw [->, red, out=195, in=345] (2) to node [midway, below] {$2$} (1);
        \end{tikzpicture}
    \end{center}
\end{example*}

We will use the characterization of the walk corresponding to $\Lambda_p$ given by Theorems \ref{Lambda characterization} and \ref{longest walk characterization} to bound $|\Lambda_p|$. 

\subsection{Some Lemmas}

We establish some notation to describe the walk in Theorem \ref{longest walk characterization}. For each $1 \leq i \leq p - 2$, define $x_i^{\max}$ and $y_i^{\max}$ to be the inverses of $i$ and $-(i + 1)$ mod $p$ respectively (such that $x_i^{\max}$ and $y_i^{\max}$ are in $\{1, 2, \ldots, p - 1\}$) --- in other words, $x_i^{\max}$ is the number of steps required to walk from $p - 1$ to $0$ using edges labelled $i$, and $y_i^{\max}$ is the number of steps required to walk from $0$ to $p - 1$ using edges labelled $i + 1$. Then the segment described in (2) corresponding to $i$ avoids $0$ if and only if it consists of strictly less than $x_i^{\max}$ edges labelled $i$ and strictly less than $y_i^{\max}$ edges labelled $i + 1$. 

Define $(x_i, y_i)$ to be the solution to \[ix + (i + 1)y \equiv 0 \pmod{p}\] over positive integers $x \leq x_i^{\max}$ and $y \leq y_i^{\max}$, with minimal $x + y$ (which exists because $(x_i^{\max}, y_i^{\max})$ is a solution, and is unique because $x + y$ uniquely determines $x$ and $y$ mod $p$). Then the segment of the walk described in (2) corresponding to $i$ must consist of exactly $x_i^{\max} - x_i$ edges labelled $i$, followed by $y_i^{\max} - y_i$ edges labelled $i + 1$ --- this is because the segment must consist of $x_i^{\max} - x$ edges labelled $i$ and $y_i^{\max} - y$ edges labelled $i + 1$ for some $x$ and $y$ satisfying these conditions, and its length is maximized when $x + y$ is minimized. The reason to consider the \emph{subtractions} $x_i$ and $y_i$ from the upper bounds on the path lengths (given by the fact that the path cannot visit $0$) rather than the lengths themselves is that proving $|\Lambda_p| \sim \frac{1}{24}p^6$ essentially amounts to showing that the subtractions are usually small; these subtractions turn out to be easier to work with than the original lengths.  

In order to use this characterization to bound $|\Lambda_p|$, we first establish some results about the size of $(x_i, y_i)$ over all possible $i$. 

\begin{lemma} \label{fraction representations}
    For any $1 \leq i \leq p - 2$, either \[i \equiv -\frac{s}{r + s} \pmod{p}\] for some relatively prime $0 < r, s < \sqrt{p}$, or \[i \equiv \frac{s}{r - s} \pmod{p}\] for some relatively prime $0 < r, s < \sqrt{p}$ which are not both $1$. Furthermore, there is at most one such pair $(r, s)$ with $i \equiv -\frac{s}{r + s} \pmod{p}$, and at most one such pair with $i \equiv \frac{s}{r - s} \pmod{p}$. 
\end{lemma}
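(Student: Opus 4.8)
The plan is to reinterpret both target congruences as a single linear congruence and produce a small solution by a pigeonhole (Thue-type) argument, letting the signs of the solution decide which of the two forms appears. First I would rewrite the representations: multiplying $i \equiv -\frac{s}{r+s} \pmod p$ through by $r+s$ gives $ir + (i+1)s \equiv 0 \pmod p$, while multiplying $i \equiv \frac{s}{r-s} \pmod p$ through by $r - s$ gives $ir - (i+1)s \equiv 0$, i.e. $ir + (i+1)(-s) \equiv 0 \pmod p$. So both forms amount to finding a nonzero integer solution $(x,y)$ of $ix + (i+1)y \equiv 0 \pmod p$ with $|x|, |y| < \sqrt p$, where the first form corresponds to $x, y > 0$ and the second to $x > 0 > y$.

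To produce such a solution I would apply pigeonhole to the $(\floor{\sqrt p} + 1)^2$ pairs $(x,y)$ with $0 \le x, y \le \floor{\sqrt p}$ under the map $(x,y) \mapsto ix + (i+1)y \bmod p$. Since $p$ is prime it is not a perfect square, so $(\floor{\sqrt p}+1)^2 > p$ and two distinct pairs must collide; their difference is a nonzero $(x,y)$ solving the congruence with $|x|, |y| \le \floor{\sqrt p} < \sqrt p$. Neither coordinate can vanish, for if $x = 0$ then $(i+1)y \equiv 0 \pmod p$, and since $1 \le i \le p - 2$ makes $i + 1$ invertible while $|y| < p$, this forces $y = 0$, contradicting nonvanishing (and symmetrically for $y$). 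Replacing $(x,y)$ by $(-x,-y)$ if necessary I may assume $x > 0$; then $y > 0$ yields the first form with $(r,s) = (x,y)$ and $y < 0$ yields the second with $(r,s) = (x,-y)$. Dividing out $\gcd(r,s)$, which is coprime to $p$ because it is smaller than $\sqrt p$, keeps the pair a positive solution and makes $r, s$ relatively prime. In the second form we automatically have $r \neq s$, since $r = s$ would give $ir - (i+1)r = -r \equiv 0 \pmod p$, impossible for $0 < r < p$; in particular $(r,s) \neq (1,1)$ and the fraction $\frac{s}{r-s}$ is well-defined, giving the ``not both equal to $1$'' condition.

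For the uniqueness assertions, suppose $(r_1, s_1)$ and $(r_2, s_2)$ both realize the same form. Cross-multiplying the two congruences to eliminate $i$ gives $i(r_1 s_2 - r_2 s_1) \equiv 0 \pmod p$ in either case, so $r_1 s_2 \equiv r_2 s_1 \pmod p$ since $i$ is invertible. But every product $r_j s_k$ lies strictly between $0$ and $p$, as all four integers lie in $(0, \sqrt p)$, whence $|r_1 s_2 - r_2 s_1| < p$ and the congruence forces $r_1 s_2 = r_2 s_1$ exactly. Two coprime positive pairs that are proportional must coincide, so $(r_1, s_1) = (r_2, s_2)$.

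The algebraic translation of the congruences, the sign bookkeeping, and the gcd reduction are all routine; primality of $p$ enters essentially in exactly two spots—to guarantee $(\floor{\sqrt p}+1)^2 > p$ in the pigeonhole step and to keep each product $r_j s_k$ below $p$ in the uniqueness step. I therefore expect the only real care to be needed in tracking that every bound can be taken as a strict inequality $< \sqrt p$ rather than merely $\le \sqrt p$, which is what both of these primality inputs secure.
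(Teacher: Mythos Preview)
Your argument is correct and essentially identical to the paper's: both apply pigeonhole to the values $ix+(i+1)y \bmod p$ over a box of side $\lfloor\sqrt p\rfloor+1$, take the difference of a colliding pair, and let the sign pattern determine which of the two forms occurs, with uniqueness following from $|r_1s_2-r_2s_1|<p$. Your write-up is slightly more explicit about the ``not both $1$'' clause and the role of primality, but there is no substantive difference in approach.
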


\begin{proof}
    To prove the existence of such a pair $(r, s)$, consider the residues \[ia + (i + 1)b \pmod{p}\] across all pairs of integers $(a, b)$ with $0 < a, b < \sqrt{p} + 1$. Since there are strictly more than $p$ such pairs, some residue must be repeated; therefore we have \[i(a_1 - a_2) + (i + 1)(b_1 - b_2) \equiv 0 \pmod{p}\] for some $(a_1, b_1) \neq (a_2, b_2)$. We cannot have $a_1 = a_2$ or $b_1 = b_2$, as one equality would imply the other (since $i$ and $i + 1$ are nonzero mod $p$), so $0 < |a_1 - a_2|, |b_1 - b_2| < \sqrt{p}$. 

    If $a_1 - a_2$ and $b_1 - b_2$ have the same sign, then we have \[ir + (i + 1)s \equiv 0 \pmod{p}\] and therefore $i \equiv -\frac{s}{r + s} \pmod{p}$ for some $0 < r, s < \sqrt{p}$. Meanwhile, if $a_1 - a_2$ and $b_1 - b_2$ have opposite sign, then we have \[ir - (i + 1)s \equiv 0 \pmod{p}\] and therefore $i \equiv \frac{s}{r - s} \pmod{p}$ for some $0 < r, s < \sqrt{p}$. In either case, if $r$ and $s$ are not relatively prime, we can divide both by $\gcd(r, s)$ to get the desired result. 

    To prove uniqueness, note that if there were two pairs $(r_1, s_1)$ and $(r_2, s_2)$ corresponding to the same value of $i$ (in either case), then we would have \[\frac{r_1}{s_1} \equiv \frac{r_2}{s_2} \pmod{p},\] which would imply $r_1s_2 - s_1r_2 \equiv 0 \pmod{p}$. But we have \[|r_1s_2 - s_1r_2| < \max(r_1s_2, s_1r_2) < p,\] so we must have $r_1s_2 = s_1r_2$; then since $\gcd(r_1, s_1) = \gcd(r_2, s_2) = 1$, this implies $(r_1, s_1) = (r_2, s_2)$. 
\end{proof}

Let $S$ denote the set of residues $1 \leq i \leq p - 2$ for which $i \equiv \frac{s}{r - s} \pmod{p}$ for some relatively prime $0 < r, s < \sqrt{p}$, and let $T$ denote the set of residues $1 \leq i \leq p - 2$ which cannot be written in this form. Then by Lemma \ref{fraction representations}, every residue $i \in T$ can instead be written as $i \equiv -\frac{s}{r + s} \pmod{p}$ for some relatively prime $0 < r, s < \sqrt{p}$. 

\begin{lemma} \label{S case}
    If $i \in S$ with $i \equiv \frac{s}{r - s} \pmod{p}$, then \[\frac{p}{\max(r, s)} < x_i + y_i < \frac{p}{\max(r, s)} + \max(r, s) - 1.\] 
\end{lemma}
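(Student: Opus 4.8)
The plan is to rewrite the defining congruence for $(x_i, y_i)$ as a cleaner lattice condition in $r$ and $s$, deduce the lower bound at once, and establish the upper bound by exhibiting one explicit admissible solution and checking it lies in the allowed box. Since $i \equiv \frac{s}{r - s} \pmod p$, I would first record the relations $i(r - s) \equiv s$ and $(i + 1)(r - s) \equiv r \pmod p$. Because $0 < |r - s| < \sqrt{p} < p$, the factor $r - s$ is invertible mod $p$, so multiplying $ix + (i + 1)y \equiv 0 \pmod p$ by $r - s$ shows it is equivalent to \[ sx + ry \equiv 0 \pmod p. \] Thus $(x_i, y_i)$ is exactly the positive solution of $sx + ry \equiv 0 \pmod p$ of minimal sum subject to $x \le x_i^{\max}$ and $y \le y_i^{\max}$.

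The lower bound is then immediate. Any positive solution satisfies $sx_i + ry_i > 0$ and is divisible by $p$, so $sx_i + ry_i \ge p$. Since $r \ne s$ (they are coprime and not both $1$) and $x_i, y_i \ge 1$, we also have $sx_i + ry_i < \max(r, s)(x_i + y_i)$, and combining the two gives $x_i + y_i > p/\max(r, s)$.

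For the upper bound, since $(x_i, y_i)$ is a minimizer it suffices to produce one admissible solution whose sum is below the target. Assume $r = \max(r, s) > s$ (the case $s > r$ being symmetric under interchanging $x \leftrightarrow y$ and $r \leftrightarrow s$). I would take $(x^\ast, y^\ast)$ to be the solution of $sx + ry = p$ with $x^\ast$ the representative of $p\,s^{-1}$ in $\{1, \ldots, r - 1\}$ (nonzero mod $r$ since $r \nmid p$) and $y^\ast = (p - sx^\ast)/r$, which is a positive multiple of $r$ divided by $r$, hence $\ge 1$, using $sx^\ast < sr < p$. A direct computation gives \[ x^\ast + y^\ast = \frac{p}{r} + \frac{(r - s)x^\ast}{r} < \frac{p}{r} + (r - s) \le \frac{p}{\max(r, s)} + \max(r, s) - 1, \] using $x^\ast \le r - 1$ and $s \ge 1$. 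So once $(x^\ast, y^\ast)$ is shown admissible, the upper bound follows.

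The main obstacle is precisely verifying that $(x^\ast, y^\ast)$ lies in the box, i.e.\ $x^\ast \le x_i^{\max}$ and $y^\ast \le y_i^{\max}$; this is where the values $x_i^{\max} \equiv \frac{r - s}{s}$ and $y_i^{\max} \equiv -\frac{r - s}{r} \pmod p$ (obtained from $i \equiv \frac{s}{r-s}$) come into play. For the first constraint I would show $x_i^{\max} \ge r - 1$: if $x_i^{\max} \le r - 2$ then $sx_i^{\max} < sr < p$, so $sx_i^{\max} \equiv r - s \pmod p$ forces $sx_i^{\max} = r - s$ and hence $s \mid r$, contradicting $\gcd(r, s) = 1$ (with $s = 1$ giving $x_i^{\max} = r - 1$ outright); thus $x^\ast \le r - 1 \le x_i^{\max}$. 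For the second constraint I would split on the size of $y_i^{\max}$, which is never $p/r$ since $r \nmid p$. If $y_i^{\max} > p/r$, then $y^\ast = (p - sx^\ast)/r < p/r < y_i^{\max}$. If $y_i^{\max} < p/r$, then $0 < ry_i^{\max} < p$ together with $ry_i^{\max} \equiv -(r - s) \pmod p$ forces $ry_i^{\max} = p - (r - s)$, so $p \equiv -s \pmod r$; this makes $x^\ast \equiv -1 \pmod r$, hence $x^\ast = r - 1$, and then $y^\ast = (p - s(r - 1))/r \le (p - (r - s))/r = y_i^{\max}$ because $s \ge 1$. The delicate point is exactly this interplay: the congruence conditions pinning down $x_i^{\max}$ and $y_i^{\max}$ and the extremal value of $x^\ast$ are linked so that the box never excludes the minimal-sum solution. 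Verifying this is the crux, while the surrounding estimates are routine.
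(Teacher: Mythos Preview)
Your proof is correct and follows essentially the same route as the paper's: rewrite the congruence as $sx + ry \equiv 0 \pmod p$, deduce the lower bound from $sx_i + ry_i \ge p$, and for the upper bound exhibit the explicit solution of $sx + ry = p$ with the smaller coordinate in $\{1,\dots,\max(r,s)-1\}$, then verify the box constraints $x \le x_i^{\max}$, $y \le y_i^{\max}$ via the congruences $x_i^{\max} \equiv (r-s)/s$ and $y_i^{\max} \equiv -(r-s)/r$. The only differences are cosmetic (you take the WLOG $r>s$ while the paper takes $s>r$, and your case split on $y_i^{\max} \lessgtr p/r$ could be shortened---under your WLOG, $ry_i^{\max}+r-s>0$ automatically, forcing $y_i^{\max}>p/r-1$ directly); as a bonus your computation of $x^\ast+y^\ast$ in fact yields the sharper bound $x_i+y_i < p/\max(r,s)+\max(r,s)-\min(r,s)$ that the paper mentions in a remark.
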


\begin{proof}
    We can rewrite the equation $ix + (i + 1)y \equiv 0 \pmod{p}$ as \[sx + ry \equiv 0 \pmod{p}.\] Assume without loss of generality that $s > r$ (the proof of the case $s < r$ is symmetric). Then we must have \[s(x + y) > sx + ry \geq p\] and therefore $x + y > \frac{p}{s}$ for any positive integer solution $(x, y)$. On the other hand, by the Chinese Remainder Theorem there exists some $0 < a < rs$ such that $s \mid (p - a)$ and $r \mid a$; then \[(x, y) = \left(\frac{p - a}{s}, \frac{a}{r}\right)\] is a solution, and satisfies $x < \frac{p}{s}$ and $y \leq s - 1$. 
    
    It remains to check that this solution satisfies $x \leq x_i^{\max}$ and $y \leq y_i^{\max}$. To check the first statement, we have $x_i^{\max} \equiv \frac{r - s}{s} \pmod{p}$, so \[sx_i^{\max} - r + s \equiv 0 \pmod{p}.\] Since the left-hand side is positive, it must then be at least $p$, which implies \[x_i^{\max} \geq \frac{p + r - s}{s} > \frac{p}{s} - 1.\] Then since $x$ and $x_i^{\max}$ are both integers and $x < x_i^{\max} - 1$, we must have $x \leq x_i^{\max}$. 

    Similarly, we have $y_i^{\max} \equiv -\frac{r - s}{r} \pmod{p}$, so \[ry_i^{\max} + r - s \equiv 0 \pmod{p}.\] If $r = 1$, then $y_i^{\max} = s - 1 \geq y$. Otherwise, we cannot have $ry_i^{\max} + r - s = 0$ (as this implies $r \mid s$, but $r$ and $s$ are relatively prime), so $ry_i^{\max} + r - s \geq p$, and \[y_i^{\max} \geq \frac{p + s - r}{r} > \frac{p}{r} > s.\] 
    
    This means \[x_i + y_i \leq x + y < \frac{p}{s} + s - 1,\] as desired. 
\end{proof}

\begin{remark*}
    In fact, it is always true that $(x_i, y_i) = (x, y)$ for the solution described above --- we must have $sx_i + ry_i = p$, since otherwise $x_i + y_i > \frac{2p}{s} > \frac{p}{s} + s$. Then $(x_i, y_i) = (\frac{p - a}{s}, \frac{a}{r})$ for some positive integer $a$, and $x_i + y_i$ is minimized when $a$ is minimized, which occurs at our choice of $a$. Note that the stronger bound \[x_i + y_i < \frac{p}{\max(r, s)} + \max(r, s) - \min(r, s)\] is true as well, using the fact that $x_i + y_i = \frac{p}{s} + a\left(\frac{1}{r} - \frac{1}{s}\right) < \frac{p}{s} + s - r$. However, this does not have a large impact on the resulting bounds, so we use the weaker bound to simplify computations. 
\end{remark*}

\begin{lemma} \label{T case}
    If $i \in T$ with $i \equiv -\frac{s}{r + s} \pmod{p}$, then $x_i + y_i \leq r + s$. 
\end{lemma}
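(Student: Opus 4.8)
The plan is to exhibit $(r,s)$ itself as a feasible candidate in the definition of $(x_i, y_i)$, so that the minimality of $x_i + y_i$ forces the bound. First I would observe that $(r,s)$ already solves the defining congruence: since $0 < r + s < 2\sqrt{p} < p$, the quantity $r+s$ is invertible mod $p$, and multiplying $i \equiv -\frac{s}{r+s} \pmod{p}$ through by $r+s$ gives $i(r+s) \equiv -s$, i.e. $ir + (i+1)s \equiv 0 \pmod{p}$. Thus $(x,y) = (r,s)$ is a positive integer solution of $ix + (i+1)y \equiv 0 \pmod p$ with sum $r + s$. Consequently, if I can show this solution respects the upper bounds $r \le x_i^{\max}$ and $s \le y_i^{\max}$, then $(r,s)$ is one of the pairs over which the minimum defining $(x_i,y_i)$ is taken, and $x_i + y_i \le r + s$ follows immediately.

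The second step is to turn the two feasibility conditions into usable congruences. Recall that $x_i^{\max} \equiv i^{-1}$ and $y_i^{\max} \equiv -(i+1)^{-1} \pmod p$, each taken in $\{1, \dots, p-1\}$. Multiplying $i(r+s) \equiv -s$ by $x_i^{\max}$ yields $s\,x_i^{\max} \equiv -(r+s) \pmod p$, and using $i + 1 \equiv \frac{r}{r+s} \pmod p$ one similarly obtains $r\,y_i^{\max} \equiv -(r+s) \pmod p$. In each case the left-hand side is a positive integer congruent to $-(r+s)$, so it is at least $p - (r+s)$; this gives the lower bounds $s\,x_i^{\max} \ge p - r - s$ and $r\,y_i^{\max} \ge p - r - s$.

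The main obstacle is verifying the two \emph{upper} bounds from these lower bounds, and this is where the hypotheses $r, s < \sqrt{p}$ (so that $r^2, s^2 < p$) and $\gcd(r,s) = 1$ are essential. I would argue by contradiction. Suppose $x_i^{\max} \le r - 1$; then $s\,x_i^{\max} \le s(r-1) = rs - s$, which combined with $s\,x_i^{\max} \ge p - r - s$ gives $p \le rs + r = r(s+1)$. Now $r^2 < p \le r(s+1)$ forces $r \le s$, while $s^2 < p \le r(s+1)$ forces $s \le r$; hence $r = s$, and coprimality makes $r = s = 1$, whence $p \le 2$, contradicting that $p$ is a large prime. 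The condition $s \le y_i^{\max}$ follows by the symmetric computation, interchanging the roles of $r$ and $s$ through the bound $r\,y_i^{\max} \ge p - r - s$. With both upper bounds in hand, $(r,s)$ is feasible and the lemma follows. I expect this squeezing argument — trapping $r$ and $s$ between $r \le s$ and $s \le r$ via the two square-root bounds — to be the only genuinely delicate point; everything else is routine manipulation of inverses modulo $p$.
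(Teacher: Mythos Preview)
Your proposal is correct and follows essentially the same route as the paper: exhibit $(r,s)$ as a solution of $ix+(i+1)y\equiv 0\pmod p$, then rule out $x_i^{\max}\le r-1$ (and symmetrically $y_i^{\max}\le s-1$) via the congruence $s\,x_i^{\max}+r+s\equiv 0\pmod p$. The only cosmetic difference is the endgame: the paper shows directly that $rs+r<p$ (using coprimality to exclude $r=s=\lfloor\sqrt{p}\rfloor$), while you deduce $p\le r(s+1)$ and squeeze $r=s=1$ from $r^2,s^2<p$, reaching $p\le 2$; these are equivalent ways of extracting the same contradiction.
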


\begin{proof}
    First, the equation $ix + (i + 1)y \equiv 0 \pmod{p}$ can be rewritten as \[-sx + ry \equiv 0 \pmod{p},\] so $(x, y) = (r, s)$ is a solution. To check that it satisfies $x \leq x_i^{\max}$, we have $x_i^{\max} \equiv -\frac{r + s}{s} \pmod{p}$, so \[sx_i^{\max} + r + s \equiv 0 \pmod{p}.\] But if $x_i^{\max} \leq r - 1$, then we would have \[sx_i^{\max} + r + s \leq rs + r < \sqrt{p}(\sqrt{p} - 1) + \sqrt{p} = p,\] since we cannot have $r = s = \lfloor \sqrt{p}\rfloor$ (as $r$ and $s$ are relatively prime); this is a contradiction. The same proof shows that we cannot have $y_i^{\max} \leq s - 1$. 

    This implies $(r, s)$ is a valid solution for $(x, y)$, which means \[x_i + y_i \leq r + s,\] as desired.  
\end{proof}

\begin{remark*}
    It is again true that we always have $(x_i, y_i) = (r, s)$. To prove this, assume for contradiction there is a solution $(x, y)$ with $x + y < r + s$. Then $x$ and $y$ cannot both be less than $\sqrt{p}$, as otherwise by uniqueness in Lemma \ref{fraction representations}, we would have $\frac{1}{\gcd(x, y)}(x, y) = (r, s)$. On the other hand, we must have $x < r + \sqrt{p}$ and $y < s + \sqrt{p}$ (as otherwise $x$ or $y$ would be greater than $r + s$). Then considering the equation \[i(x - r) + (i + 1)(y - s) \equiv 0 \pmod{p}\] gives that $i \in S$ (since $x - r$ and $y - s$ have opposite sign, and $0 < |x - r|, |y - s| < \sqrt{p}$), contradiction. 
\end{remark*}

Finally, we will also need the following number theoretic lemma (which arises from the fact that the residues $i \in S$ correspond exactly to pairs of distinct relatively prime $(r, s)$ with $0 < r, s < \sqrt{p}$). 

\begin{lemma} \label{totient bound}
    For any positive integer $n$, we have \[\sum_{m = 1}^n \frac{\varphi(m)}{m} > \frac{3}{5}n - 6,\] where $\varphi$ denotes the Euler totient function.  
\end{lemma}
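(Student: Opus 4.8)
The plan is to extract the main term $\frac{6}{\pi^2}n$ from the sum by Möbius inversion and then control the error, exploiting that $\frac{6}{\pi^2} > \frac{3}{5}$ leaves a linear amount of slack to absorb the error. First I would use the identity $\frac{\varphi(m)}{m} = \sum_{d \mid m} \frac{\mu(d)}{d}$ and interchange the order of summation to write
\[
\sum_{m=1}^n \frac{\varphi(m)}{m} = \sum_{d=1}^n \frac{\mu(d)}{d} \floor{\frac{n}{d}}.
\]
Writing $\floor{n/d} = \frac{n}{d} - \{n/d\}$ (where $\{\cdot\}$ denotes the fractional part) and recalling the standard evaluation $\sum_{d=1}^\infty \frac{\mu(d)}{d^2} = \frac{1}{\zeta(2)} = \frac{6}{\pi^2}$, this splits as
\[
\sum_{m=1}^n \frac{\varphi(m)}{m} = \frac{6}{\pi^2}\, n \;-\; n\sum_{d > n}\frac{\mu(d)}{d^2} \;-\; \sum_{d=1}^n \frac{\mu(d)}{d}\left\{\frac{n}{d}\right\}.
\]

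Next I would bound the two error terms. The tail term satisfies $\bigl| n \sum_{d>n} \mu(d)/d^2 \bigr| \le n \sum_{d>n} d^{-2} < n \cdot \frac{1}{n} = 1$, using $\sum_{d>n} d^{-2} < \int_n^\infty x^{-2}\,dx = 1/n$. For the fractional-part sum, the $d=1$ term vanishes since $\{n\} = 0$, so the sum is bounded in absolute value by $\sum_{d=2}^n \frac{1}{d} \le \ln n$. Combining these gives the key estimate
\[
\sum_{m=1}^n \frac{\varphi(m)}{m} > \frac{6}{\pi^2}\, n - 1 - \ln n,
\]
where the strictness is inherited from the strict tail bound.

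Finally, it remains to verify that $\frac{6}{\pi^2} n - 1 - \ln n \ge \frac{3}{5} n - 6$, i.e.\ that $c\,n + 5 \ge \ln n$, where $c := \frac{6}{\pi^2} - \frac{3}{5} > 0$. Since $\ln n - c\,n$ is maximized over $n > 0$ at $n = 1/c$ with maximum value $-\ln c - 1$, and since $c > \frac{1}{200} > e^{-6}$ forces $-\ln c - 1 < 5$, the desired inequality holds for every $n \ge 1$, completing the argument. (As an alternative to this calculus step, one could instead verify the inequality directly for $n$ below a small threshold and use the linear slack only for larger $n$.)

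The main obstacle is that the fractional-part error is genuinely of size $O(\log n)$, so no constant error bound is available; the argument succeeds only because the linear gap $\bigl(\frac{6}{\pi^2} - \frac{3}{5}\bigr)n$ eventually overtakes it. The one delicate point is confirming that this gap wins uniformly in $n$ — in particular at the worst case near $n = 1/c \approx 126$, where the two sides are closest — and it is precisely the comfortable margin $c > e^{-6}$ that guarantees this.
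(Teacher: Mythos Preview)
Your argument is correct and follows essentially the same route as the paper: rewrite $\sum_{m\le n}\varphi(m)/m$ via M\"obius inversion as $\sum_{d\le n}\frac{\mu(d)}{d}\lfloor n/d\rfloor$, extract the main term $\frac{6}{\pi^2}n$, and bound the tail and fractional-part errors by $O(1)$ and $O(\log n)$ respectively. The only differences are cosmetic---you save a constant by observing that the $d=1$ fractional-part term vanishes, and you make the final comparison $\bigl(\tfrac{6}{\pi^2}-\tfrac{3}{5}\bigr)n+5\ge\ln n$ explicit via a one-line calculus optimization rather than leaving it implicit.
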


\begin{remark*}
    In general, one can obtain better lower bounds than Lemma \ref{totient bound}. However, for our purposes, we are content with this simple argument. 
\end{remark*}

\begin{proof}
    By using the fact that $\varphi = \mu * \operatorname{id}$ (where $*$ denotes Dirichlet convolution, $\mu$ denotes the M\"obius function, and $\operatorname{id}$ denotes the identity map sending every positive integer to itself) and swapping the order of summation, we have
    \begin{align*}
        \sum_{m = 1}^n \frac{\varphi(m)}{m} &= \sum_{m = 1}^n \frac{1}{m}\sum_{d \mid m} \mu(d)\cdot \frac{m}{d} \\
        &= \sum_{d = 1}^\infty \frac{\mu(d)}{d}\cdot \floor{\frac{n}{d}} \\
        &> n\sum_{d = 1}^\infty \frac{\mu(d)}{d^2} - \sum_{d = 1}^n \frac{1}{d} - n\sum_{d = n + 1}^\infty \frac{1}{d^2}. 
    \end{align*}
    To calculate the first term, let $1$ denote the function sending every positive integer to $1$. Then $(\mu * 1)(n)$ is $1$ if $n = 1$ and $0$ otherwise, so $\sum_{d = 1}^\infty \frac{\mu(d)}{d^2} \cdot \sum_{d = 1}^\infty \frac{1}{d^2} = 1$, and therefore \[\sum_{d = 1}^\infty \frac{\mu(d)}{d^2} = \frac{1}{\zeta(2)} = \frac{6}{\pi^2}.\] This provides the bound \[\sum_{m = 1}^n \frac{\varphi(m)}{m} > n\cdot \frac{6}{\pi^2} - \log n - 2 > \frac{3}{5}n - 6.\qedhere\] 
\end{proof}

\subsection{Proof of Theorem \ref{main theorem}}

Suppose that $p > 10^6$ is prime. We use the above lemmas to establish bounds on $|\Lambda_p|$. 

For convenience, we first define a few more pieces of data associated with the abacus corresponding to $|\Lambda_p|$. Recall that $b_1$, \ldots, $b_{p - 1}$ denote the bead multiplicities of $\Lambda_p$. By Lemma \ref{right alignment}, we can also define the \emph{row multiplicities} $m_1$, \ldots, $m_{p - 1}$, where $m_i$ is the number of rows consisting of $i$ gaps followed by $p - i$ beads. Then we have \[b_i = m_1 + m_2 + \cdots + m_i\] for all $1 \leq i \leq p - 1$. By Theorems \ref{Lambda characterization} and \ref{longest walk characterization}, we have $m_1 = p - 1$, $m_{p - 1} = p - 2$, and for all $2 \leq i \leq p - 2$, \[m_i = y_{i - 1}^{\max} - y_{i - 1} + x_i^{\max} - x_i = p - y_{i - 1} - x_i.\] 

As noted by McDowell, the row multiplicities have useful symmetry --- we have $(x_i, y_i) = (y_{p - 1 - i}, x_{p - 1 - i})$ for all $1 \leq i \leq p - 2$ (since the conditions used to define them are identical), which means $m_i = m_{p - i}$ for all $2 \leq i \leq p - 2$ \cite[Corollary 4.9]{McD22}. 

Now define $d_i := p - m_i$ for all $1 \leq i \leq p - 1$, so that $d_1 = 1$, $d_{p - 1} = 2$, and $d_i = y_{i - 1} + x_i$ for all $2 \leq i \leq p - 2$. Finally, define $c_i := d_1 + d_2 + \cdots + d_i$, so that \[b_i = m_1 + \cdots + m_i = ip - c_i,\] and define \[c := \sum_{i = 1}^{p - 2} (x_i + y_i) = 2 + \sum_{i = 2}^{p - 2} (y_{i - 1} + x_i).\] Then we have $c_{p - 1} = c + 1$, while the symmetry property implies that \[c_i + c_{p - 1 - i} = c\] for all $1 \leq i \leq p - 2$. 

Intuitively, the variables $d_i$ and $c_i$ represent how far $m_i$ and $b_i$ are from their crude upper bounds --- in particular, McDowell obtained the upper bound (\ref{McD upper bound}) by using the fact that $m_i \leq p - 2$ for all $i \geq 2$. Our strategy is therefore to show that these subtractions are small. More precisely, we use the previous lemmas to establish upper and lower bounds on $c$ which are both on the order of $p\sqrt{p}$, and then translate these bounds on $c$ to bounds on $|\Lambda_p|$ via Lemma \ref{size from bi}.

\begin{lemma} \label{upper bound on c}
    We have\footnote{This lemma is true for all $p \geq 17$.} that $c < \frac{11}{3}p\sqrt{p}$. 
\end{lemma}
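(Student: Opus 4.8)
The goal is to bound $c = \sum_{i=1}^{p-2}(x_i + y_i)$ from above. We have partitioned the residues $\{1, \ldots, p-2\}$ into the sets $S$ and $T$, and Lemmas \ref{S case} and \ref{T case} give us per-index bounds on $x_i + y_i$. So the plan is to split the sum as
\[c = \sum_{i \in S}(x_i + y_i) + \sum_{i \in T}(x_i + y_i)\]
and estimate each piece separately, then add the two bounds.

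The sum over $T$ should be the easy part. By Lemma \ref{T case}, each term is at most $r + s$ where $0 < r, s < \sqrt{p}$, so each term is less than $2\sqrt{p}$; since there are fewer than $p$ indices in $T$, this contributes at most something like $2p\sqrt{p}$. One subtlety I would want to check is whether I can reindex the sum over $T$ by the pairs $(r,s)$ rather than by $i$: by the uniqueness half of Lemma \ref{fraction representations}, each $i \in T$ corresponds to at most one pair $(r,s)$ with $i \equiv -s/(r+s)$, and conversely each relatively prime pair $(r,s)$ gives at most one residue $i$, so summing $r + s$ over the relevant pairs is a valid overcount; but the crude bound "(\#indices)$\cdot 2\sqrt{p}$" already suffices and avoids any pairing subtlety.

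The sum over $S$ is where the real work is, and I expect this to be the main obstacle. By Lemma \ref{S case}, each term satisfies $x_i + y_i < \frac{p}{\max(r,s)} + \max(r,s) - 1$, where $i \equiv s/(r-s)$ for a relatively prime pair $(r,s)$ with $0 < r,s < \sqrt{p}$ not both $1$. The dominant piece is $\frac{p}{\max(r,s)}$, and the term $\max(r,s) - 1 < \sqrt{p}$ again contributes at most $\sim p\sqrt{p}$ over all of $S$. So the crux is to bound $\sum_{i \in S} \frac{p}{\max(r,s)}$. Here I would use the pairing: by Lemma \ref{fraction representations}, distinct residues $i \in S$ correspond to distinct relatively prime pairs $(r,s)$ (with the pairs $(r,s)$ and $(s,r)$ giving different residues since $s/(r-s) \ne r/(s-r)$), and conversely each pair gives at most one $i$. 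This lets me replace the sum over $i \in S$ by a sum over coprime pairs $(r,s)$ in the range $0 < r, s < \sqrt{p}$, bounding
\[\sum_{i \in S} \frac{p}{\max(r,s)} \leq p \sum_{\substack{0 < r,s < \sqrt{p} \\ \gcd(r,s) = 1}} \frac{1}{\max(r,s)}.\]

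To estimate the last double sum I would group the coprime pairs by $m := \max(r,s)$. For each fixed $m$ with $1 \le m < \sqrt{p}$, the coprime pairs $(r,s)$ with $\max(r,s) = m$ number $2\varphi(m)$ (one for each $r < m$ coprime to $m$ in each of the two orderings, for $m \ge 2$), each contributing $\frac{1}{m}$, giving roughly $\frac{2\varphi(m)}{m}$ per value of $m$. Summing over $m$ and invoking a crude upper bound on $\sum \frac{\varphi(m)}{m}$ (which is at most $\sum 1 = m$, i.e.\ trivially at most $\sqrt{p}$ terms of size $\le 1$) bounds $\sum_m \frac{2\varphi(m)}{m} < 2\sqrt{p}$, so this piece is at most about $2p\sqrt{p}$. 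The remaining task is purely arithmetic: I would collect the dominant contribution ($\le 2p\sqrt{p}$ from $S$), the lower-order $\max(r,s)-1$ contribution from $S$, and the $T$ contribution, and verify that their sum is strictly below $\frac{11}{3}p\sqrt{p}$ for $p > 10^6$ (or $p \ge 17$ as the footnote claims), being careful that the constant $\frac{11}{3}$ leaves enough slack to absorb the lower-order terms. The main obstacle is precisely this bookkeeping — ensuring the counting of coprime pairs by $\max$ is exactly right and that the accumulated constants stay under $\frac{11}{3}$.
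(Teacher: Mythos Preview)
Your overall plan matches the paper's: split $c$ over $S$ and $T$, apply Lemmas~\ref{S case} and~\ref{T case}, and group the $S$-sum by $m = \max(r,s)$. The difficulty is that the specific crude shortcuts you take do not reach the constant $\tfrac{11}{3}$. Your stated bounds are roughly: $2p\sqrt{p}$ for the main $S$-piece, ``$\sim p\sqrt{p}$'' for the $\max(r,s)-1$ piece (bounding each term by $\sqrt{p}$ and using $|S| < p$), and $2p\sqrt{p}$ for $T$ (bounding each term by $2\sqrt{p}$ and using $|T| < p$). These sum to about $5p\sqrt{p}$, well above $\tfrac{11}{3}p\sqrt{p}$, so the ``purely arithmetic'' verification you defer would in fact fail.

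Two tightenings close the gap, and the paper uses both. First, for $T$ you must reindex by the pairs $(r,s)$ after all --- the very step you dismissed as unnecessary --- and sum $r + s$ over coprime pairs in $(0,\sqrt{p})^2$; this gives $< p\sqrt{p}$ rather than $2p\sqrt{p}$. Second, for the lower-order $S$-piece, instead of bounding each $\max(r,s)-1$ by $\sqrt{p}$, use the same grouping by $m$ you already set up: at most $2m$ pairs with $\max(r,s)=m$ each contribute $m-1$, so the total is $\sum_m 2m(m-1) = 4\binom{\lfloor\sqrt{p}\rfloor+1}{3} < \tfrac{2}{3}p\sqrt{p}$. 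With these refinements, $S$ contributes $< \tfrac{8}{3}p\sqrt{p}$ and $T$ contributes $< p\sqrt{p}$, giving exactly $\tfrac{11}{3}p\sqrt{p}$. (Incidentally, the paper simply uses the count $2m$ rather than your $2\varphi(m)$; for an upper bound this loses nothing and avoids any totient estimate.)
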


\begin{proof}
    Separate the sum as \[c = \sum_{i = 1}^{p - 2} (x_i + y_i) = \sum_{i \in S} (x_i + y_i) + \sum_{i \in T} (x_i + y_i).\] For the sum over $i \in S$, if $i \equiv \frac{s}{r - s} \pmod{p}$ as given by Lemma \ref{fraction representations} with $\max(r, s) = m$, then we have \[x_i + y_i \leq \frac{p}{m} + m - 1\] by Lemma \ref{S case}. For each $1 \leq m < \sqrt{p}$, there are less than $2m$ pairs $(r, s)$ with $\max(r, s) = m$, so we have 
    \begin{align*}
        \sum_{i \in S} (x_i + y_i) &< \sum_{m = 1}^{\floor{\sqrt{p}}} 2m\left(\frac{p}{m} + m - 1\right) \\
        &< 2p\sqrt{p} + 4\sum_{m = 1}^{\floor{\sqrt{p}}} \binom{m}{2} \\
        &= 2p\sqrt{p} + 4\binom{\floor{\sqrt{p}} + 1}{3} \\
        &< \frac{8}{3}p\sqrt{p}.
    \end{align*}
    Meanwhile, for the sum over $i \in T$, if $i \equiv -\frac{s}{r + s} \pmod{p}$ as given by Lemma \ref{fraction representations}, then we have $x_i + y_i \leq r + s$ by Lemma \ref{T case}. Since $\gcd(r, s) = 1$ for any such pair $(r, s)$, for any fixed $s > 2$ there are less than $\sqrt{p} - 1$ values of $r$ (as $r$ cannot equal $s$); meanwhile for $s = 2$ there are less than $\sqrt{p} - 2$ values of $r$ (as $r$ cannot equal $2$ or $4$, which are both less than $\sqrt{p}$), and for $s = 1$ there are less than $\sqrt{p}$ values of $r$. Combining these estimates, we have \[\sum_{i \in T} (x_i + y_i) \leq \sum_{(r, s)} (r + s) < 2(\sqrt{p} - 1)\cdot \frac{\sqrt{p}(\sqrt{p} + 1)}{2} < p\sqrt{p}.\] Combining the two sums gives $c < \frac{11}{3}p\sqrt{p}$.
\end{proof}

We now obtain a lower bound on $c$. 

\begin{lemma} \label{lower bound on c}
    We have that $c > \frac{6}{5}p\sqrt{p} - 16p$.
\end{lemma}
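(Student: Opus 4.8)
The plan is to derive the lower bound from the contribution of $i \in S$ alone, simply discarding the (strictly positive) contributions of $i \in T$. Since each $x_i, y_i$ is a positive integer, we have $c > \sum_{i \in S}(x_i + y_i)$, and Lemma~\ref{S case} supplies the estimate $x_i + y_i > \frac{p}{\max(r,s)}$ whenever $i \equiv \frac{s}{r-s} \pmod{p}$. Thus the whole argument reduces to parametrizing $S$ by its associated pairs $(r,s)$ and summing $\frac{p}{\max(r,s)}$, which is exactly the shape handled by Lemma~\ref{totient bound}.

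First I would show that sending each $i \in S$ to its pair $(r,s)$ is a bijection onto the set of ordered pairs $(r,s)$ with $0 < r, s < \sqrt{p}$, $\gcd(r,s) = 1$, and $r \neq s$. Injectivity is precisely the uniqueness clause of Lemma~\ref{fraction representations}. For surjectivity, given such a pair, set $i \equiv \frac{s}{r-s} \pmod{p}$; this is well-defined since $r \neq s$, and one checks $i \not\equiv 0$ (else $s \equiv 0$) and $i \not\equiv -1$ (else $r \equiv 0$), so $1 \leq i \leq p-2$ and hence $i \in S$ by definition. Note that $\gcd(r,s) = 1$ forces $r = s$ only in the excluded case $r = s = 1$, so the condition $r \neq s$ is exactly the ``not both $1$'' hypothesis.

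Next I would group these pairs by $m := \max(r,s)$. For each $m \geq 2$, the coprime pairs with $\max(r,s) = m$ and $r \neq s$ are exactly those having one coordinate equal to $m$ and the other ranging over the integers in $\{1, \ldots, m-1\}$ coprime to $m$; this yields $2\varphi(m)$ pairs, the factor of $2$ recording which coordinate equals $m$. Since $0 < r, s < \sqrt{p}$ forces $m \leq \floor{\sqrt{p}} =: N$, I obtain
\[c > \sum_{(r,s)} \frac{p}{\max(r,s)} = \sum_{m = 2}^{N} \frac{2\varphi(m)}{m}\,p = 2p\left(\sum_{m=1}^N \frac{\varphi(m)}{m} - 1\right),\]
using $\varphi(1)/1 = 1$ to extend the sum down to $m = 1$.

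Finally I would invoke Lemma~\ref{totient bound} to bound $\sum_{m=1}^N \frac{\varphi(m)}{m} > \frac{3}{5}N - 6$, giving $c > 2p\left(\frac{3}{5}N - 7\right) = \frac{6}{5}pN - 14p$. Since $p$ is prime (hence not a perfect square), $N = \floor{\sqrt{p}} > \sqrt{p} - 1$, so $\frac{6}{5}pN > \frac{6}{5}p\sqrt{p} - \frac{6}{5}p$, and therefore $c > \frac{6}{5}p\sqrt{p} - \frac{76}{5}p > \frac{6}{5}p\sqrt{p} - 16p$, as claimed. I expect the bijection in the second step to be the only delicate point — in particular verifying surjectivity onto the pairs and the clean count of $2\varphi(m)$ pairs with $\max(r,s) = m$; everything else is a direct substitution into the totient estimate, and the $T$-contribution is simply thrown away.
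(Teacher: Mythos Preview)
Your argument is correct and essentially identical to the paper's: discard the $T$-contribution, use Lemma~\ref{S case} to get $x_i + y_i > p/\max(r,s)$, count the $2\varphi(m)$ coprime pairs with $\max(r,s) = m$, and plug into Lemma~\ref{totient bound}. One terminological quibble: what you call ``injectivity'' of the map $i \mapsto (r,s)$ is really its well-definedness (each $i$ has at most one pair) --- injectivity is automatic from the formula $i \equiv s/(r-s)$ --- but since you verify surjectivity separately and the bijection holds either way, the substance is unaffected.
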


\begin{proof}
    We have \[c \geq \sum_{i \in S} (x_i + y_i) > \sum_{(r, s)} \frac{p}{\max(r, s)}\] by Lemma \ref{S case}. For each $2 \leq m < \sqrt{p}$, there are $2\varphi(m)$ pairs of relatively prime distinct $(r, s)$ with $0 < r, s < \sqrt{p}$ and $\max(r, s) = m$, and by uniqueness in Lemma \ref{fraction representations}, each corresponds to a different value of $i \in S$. This means \[c > \sum_{m = 2}^{\floor{\sqrt{p}}} \frac{p}{m}\cdot 2\varphi(m) > 2p\left(\frac{3}{5}\floor{\sqrt{p}} - 7\right) > \frac{6}{5}p\sqrt{p} - 16p,\] using the estimate given by Lemma \ref{totient bound}. 
\end{proof}

Finally, we will also need the following upper bound on an intermediate sum. 

\begin{lemma} \label{silly 18 lemma}
    We have\footnote{This lemma is true for all $p > 256$.} that $c_{\floor{p/18}} < \frac{2}{5}p\sqrt{p} + p$.
\end{lemma}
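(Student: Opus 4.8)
The plan is to reduce the partial sum $c_{\floor{p/18}}$ to the head of the full sum $c = \sum_{i=1}^{p-2}(x_i+y_i)$ that was bounded in Lemma \ref{upper bound on c}, and then to exploit the fact that the constraint $i \le \floor{p/18}$ discards most of the pairs $(r,s)$ appearing in $c$. Writing $N = \floor{p/18}$ and unwinding $c_N = \sum_{i=1}^N d_i$ with $d_1 = 1$ and $d_i = x_i + y_{i-1}$, a reindexing of the shifted sums gives $c_N = \bigl(\sum_{i=1}^N (x_i+y_i)\bigr) + 1 - x_1 - y_N \le \sum_{i=1}^N (x_i+y_i)$, since $x_1, y_N \ge 1$. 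So it suffices to bound $\Sigma := \sum_{i=1}^N (x_i+y_i)$ by $\tfrac{2}{5}p\sqrt p$. As in Lemma \ref{upper bound on c} I would split $\Sigma = \Sigma_S + \Sigma_T$ according to whether $i \in S$ or $i \in T$ and estimate each using Lemmas \ref{S case} and \ref{T case}; the new ingredient is purely the effect of the cutoff $i \le N$.

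The key structural step, which I expect to be the main obstacle, is to determine which pairs $(r,s)$ produce a residue $i$ with $i \le N$. For $i \in S$ with $i \equiv \frac{s}{r-s} \pmod p$ and $r > s$, setting $\delta = r - s$ one has $i = \frac{s + np}{\delta}$ for the unique $n \in \{0,\ldots,\delta-1\}$ with $\delta \mid s+np$, and $\gcd(r,s)=1$ forces $n = 0$ only for the ``diagonal'' pairs $(i+1,i)$ with $i = s < \sqrt p$. For every non-diagonal pair $n \ge 1$, hence $i \ge \frac{1+p}{\delta} > \frac{p}{\delta}$, so $i \le N$ forces $\delta \ge 19$ (and symmetrically for $r < s$, up to a harmless boundary case at $\delta = 18$). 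Thus $\Sigma_S$ is carried by the diagonal pairs together with the non-diagonal pairs of large difference $\delta$, and for fixed $\delta$ the admissible pairs are exactly those whose wraparound multiplier satisfies $n \lesssim \delta/18$, a fraction roughly $1/18$ of all pairs of difference $\delta$. Making this count rigorous --- bounding the number of $n \in \{1,\ldots,\floor{\delta/18}\}$ coprime to $\delta$, and the number of $s$ in each residue class modulo $\delta$ lying in $[1,\sqrt p]$ --- is the technical heart of the argument.

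With this in hand I would bound $\Sigma_S$ as follows. The diagonal pairs contribute $\sum_{m\le\sqrt p}(\frac{p}{m}+m) = O(p\log p)$, which is negligible. For the non-diagonal pairs I reorganize by $\delta$: using $x_i+y_i < \frac{p}{\max(r,s)} + \delta$ from Lemma \ref{S case}, together with the count of admissible pairs of difference $\delta$ and the elementary estimate $\sum_{\delta\le\sqrt p}\log(\sqrt p/\delta) = O(\sqrt p)$, the main term works out to roughly $\frac{1}{9}p\sqrt p$ for each of the cases $r>s$ and $r<s$, with the $+\delta$ corrections and totient fluctuations landing in lower order. For $\Sigma_T$ I would argue in parallel: by Lemma \ref{T case} each $i \in T$ contributes at most $r+s < 2\sqrt p$, and the same wraparound analysis (now with $i = \frac{np-s}{r+s}$) shows that $i \le N$ again retains only a $\sim 1/18$ fraction of the pairs, so $\Sigma_T$ is well controlled. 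Summing all the pieces keeps $\Sigma$ below $\frac{2}{5}p\sqrt p$, with the slack in the constant (reflected in the footnote that the bound holds for all $p > 256$) absorbing the crude estimates and the boundary cases. The delicate point is that the main term $\frac{1}{9}p\sqrt p$ per case already consumes a substantial share of the budget $\frac{2}{5}p\sqrt p$, so the $1/18$ savings on $\Sigma_T$ and the careful treatment of the correction terms are genuinely necessary rather than cosmetic.
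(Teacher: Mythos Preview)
Your initial reduction $c_N \le \sum_{i=1}^N (x_i+y_i)$ is exactly what the paper does. After that the two arguments diverge sharply, and the paper's route is considerably simpler than yours.

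You propose to determine, via the wraparound multiplier $n$, precisely which pairs $(r,s)$ produce residues $i\le p/18$, and then to sum the surviving contributions. This is plausible and your structural observation (that for $i\in S$ with $r>s$ and $\delta=r-s\ge 2$ one has $i>p/\delta$, forcing $\delta\ge 19$) is correct. But carrying it through requires an equidistribution-type count of $s$ in residue classes modulo $\delta$, control of the coprimality constraints, and then a careful summation over $\delta$; you flag this yourself as ``the technical heart of the argument.'' The analogous step for $\Sigma_T$ is of the same flavour. All of this can presumably be made to work, but it is substantial bookkeeping, and keeping the constants small enough (especially with the diagonal contribution $\sum_{m\le\sqrt p} p/m\sim\tfrac12 p\log p$, which is not so negligible for moderate $p$) would take care.

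The paper sidesteps all of this with two observations. First, swapping $r$ and $s$ sends $i\mapsto p-1-i$; hence for each $m$, at most half of the ordered pairs with $\max(r,s)=m$ give $i<p/2$, so at most $m$ of them give $i<p/18$. Second, this alone is enough for a greedy upper bound: the contribution from level $m$ is at most $m\bigl(\tfrac{p}{m}+m-1\bigr)$, each such bound exceeds the uniform bound $2\sqrt p$ that handles $T$-terms, and taking levels $m=1,\ldots,t$ with $t=\lfloor\sqrt p/3\rfloor$ already accounts for more than $\lfloor p/18\rfloor$ terms. Summing gives $tp+2\binom{t+1}{3}<\tfrac25 p\sqrt p+p$ directly. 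So the paper never needs to identify \emph{which} pairs survive the cutoff --- only how many can survive at each level --- and the symmetry $i\leftrightarrow p-1-i$ gives that for free. What your approach would buy, if pushed through, is a sharper constant (closer to the heuristic $1/18$ density), but for the purposes of Theorem~\ref{main theorem} that sharpening is not needed.
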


\begin{proof}
    We have \[c_{\floor{p/18}} = 1 + \sum_{i = 2}^{\floor{p/18}} (y_{i - 1} + x_i) < \sum_{i = 1}^{\floor{p/18}} (x_i + y_i).\] By Lemmas \ref{S case} and \ref{T case}, if $i \in T$ then $x_i + y_i < 2\sqrt{p}$; on the other hand, if $i \in S$ is written as $i \equiv \frac{r}{r - s} \pmod{p}$ with $\max(r, s) = m$, then $x_i + y_i < \frac{p}{m} + m - 1$. For each $1 \leq m < \sqrt{p}$, there are at most $m$ pairs $(r, s)$ with $\max(r, s) = m$ corresponding to indices $i < \frac{p}{18}$, since swapping $r$ and $s$ maps $i \mapsto p - 1 - i$. 

    Let $t := \floor{\sqrt{p}/3}$. Then we can bound one entry $x_i + y_i$ above by $p$, two entries by $\frac{p}{2} + 1$, three by $\frac{p}{3} + 2$, and so on, up to $t$ entries by $\frac{p}{t} + (t - 1)$ --- note that $1 + 2 + \cdots + t > \frac{t^2}{2} > \frac{p}{18}$, and $\frac{p}{t} + (t - 1) > 2\sqrt{p}$. This means 
    \begin{align*}
        \sum_{i = 1}^{\floor{p/18}} (x_i + y_i) &< \sum_{m = 1}^t m\left(\frac{p}{m} + m - 1\right) \\
        &= tp + 2\binom{t + 1}{3} \\
        &< p\left(\frac{\sqrt{p}}{3} + 1\right) + \frac{1}{3}\left(\frac{\sqrt{p}}{3} + 1\right)^3. 
    \end{align*}
    This expression is less than $\frac{2}{5}p\sqrt{p} + p$, as $p > 10^6 > 256$. 
\end{proof}

Now we are in a position to establish the claimed inequalities in Theorem \ref{main theorem}.

\begin{proof}[Proof of Theorem \ref{main theorem}]
    In order to establish both bounds, we use the formula 
    \begin{align}
        |\Lambda_p| = -\frac{1}{2}\left(\sum_{i = 1}^{p - 1} b_i\right)^2 + \frac{p}{2}\sum_{i = 1}^{p - 1} b_i^2 + \sum_{i = 1}^{p - 1} \left(i - \frac{p - 1}{2}\right)b_i \label{size formula}
    \end{align}
    given by Lemma \ref{size from bi}. For both directions, we first have 
    \begin{align}
        \sum_{i = 1}^{p - 1} b_i = \sum_{i = 1}^{p - 1} (ip - c_i) = \frac{1}{2}p^2(p - 1) - \frac{1}{2}pc - 1, \label{sum bi}
    \end{align}
    using the fact that $c_i + c_{p - 1 - i} = c$ for all $1 \leq i \leq p - 2$, and $c_{p - 1} = c + 1$. 

    We first prove the lower bound \[|\Lambda_p| > \frac{1}{24}p^6 - p^5\sqrt{p}.\] For the first term of (\ref{size formula}), we use the bound \[-\frac{1}{2}\left(\sum_{i = 1}^{p - 1} b_i\right)^2 > -\frac{1}{2}\left(\frac{p^3 - pc}{2}\right)^2 = -\frac{1}{8}p^6 + \frac{1}{4}p^4c - \frac{1}{8}p^2c^2\] from (\ref{sum bi}). For the second term of (\ref{size formula}), we use the bound \[b_i^2 = (ip - c_i)^2 > i^2p^2 - 2ipc\] for all $1 \leq i \leq p - 1$ (this is clear for $1 \leq i \leq p - 2$ as $c_i \leq c$, while for $i = p - 1$, it follows from the fact that $(c + 1)^2 > 2p(p - 1)$,  as $p > 10^6 > 250$, using the bound in Lemma \ref{lower bound on c}), which gives \[\frac{p}{2}\sum_{i = 1}^{p - 1} b_i^2 > \frac{1}{12}p^4(p - 1)(2p - 1) - \frac{1}{2}p^3(p - 1)c.\] Finally, the third term in (\ref{size formula}) must be positive, as for each $1 \leq i \leq \frac{p - 1}{2}$ we have $b_{p - 1 - i} \geq b_i$, which means \[\left(i - \frac{p - 1}{2}\right)b_i + \left(p - 1 - i - \frac{p - 1}{2}\right)b_{p - 1 - i} \geq 0.\] Combining these bounds and using (\ref{size formula}), we have \[|\Lambda_p| > \frac{1}{24}p^6 - \frac{1}{4}p^4c - \frac{1}{8}p^2c^2 - \frac{1}{4}p^5.\] Using the bound $c < \frac{11}{3}p\sqrt{p} < 4p\sqrt{p}$, we have \[|\Lambda_p| > \frac{1}{24}p^6 - \frac{11}{12}p^5\sqrt{p} - \frac{9}{4}p^5 > \frac{1}{24}p^6 - p^5\sqrt{p},\] where the second inequality holds as $p > 10^6 > 729$. 

    We now prove the upper bound \[|\Lambda_p| < \frac{1}{24}p^6 - \frac{1}{200}p^5\sqrt{p}.\] The first term of (\ref{size formula}) is given by 
    \begin{align*}
        -\frac{1}{2}\left(\sum_{i = 1}^{p - 1} b_i\right)^2 &< -\frac{1}{8}(p^3 - pc - p^2 - 2)^2 \\
        &< -\frac{1}{8}p^6 + \frac{1}{4}p^4c - \frac{1}{8}p^2c^2 + \frac{1}{4}p^5
    \end{align*}
    by (\ref{sum bi}). To bound the second term of (\ref{size formula}), we pair terms --- for each $1 \leq i \leq \frac{p - 1}{2}$, we have
    \begin{align*}
        b_i^2 + b_{p - 1 - i}^2 &= (ip - c_i)^2 + ((p - 1 - i)p - c_{p - 1 - i})^2 \\
        &< i^2p^2 + (p - 1 - i)^2p^2 - p(c - 2c_i)(p - 1 - 2i) - cp(p - 1) + c^2,
    \end{align*}
    where we used the fact that $c_i^2 + c_{p - 1 - i}^2 < (c_i + c_{p - 1 - i})^2 = c^2$. For all $1 \leq i \leq \frac{p - 1}{2}$, we have $c - 2c_i \geq 0$ and $p - 1 - 2i \geq 0$. Furthermore, for all $i < \frac{p}{18}$, by Lemmas \ref{lower bound on c} and \ref{silly 18 lemma} we have \[c - 2c_i > \frac{2}{5}p\sqrt{p} - 18p,\] and $p - 1 - 2i \geq \frac{8p}{9} - 1$. Finally, we have $b_{p - 1}^2 < p^2(p - 1)^2$. Combining these bounds gives 
    \begin{align*}
        \sum_{i = 1}^{p - 1} b_i^2 &< \frac{p^3(p - 1)(2p - 1)}{6} - \frac{p - 2}{2}\cdot cp(p - 1) - p\left(\frac{1}{18}p - 1\right)\left(\frac{2}{5}p\sqrt{p} - 18p\right)\left(\frac{8}{9}p - 1\right) + \frac{p - 2}{2}\cdot c^2 \\
        &< \frac{1}{3}p^5 - \frac{1}{2}p^3c - \frac{1}{60}p^4\sqrt{p} + \frac{1}{2}pc^2 + \frac{1}{4}p^4 + p^2c + \frac{1}{6}p^3,
    \end{align*}
    using the bounds $\frac{1}{18}p - 1 > \frac{1}{20}p$ and $\frac{8}{9}p - 1 > \frac{5}{6}p$, which hold as $p > 10^6 > 180$. So the second term in (\ref{size formula}) is bounded above by \[\frac{p}{2}\sum_{i = 1}^{p - 1} b_i^2 < \frac{1}{6}p^6 - \frac{1}{4}p^4c - \frac{1}{120}p^5\sqrt{p} + \frac{1}{4}p^2c^2 + \frac{1}{8}p^5 + \frac{1}{2}p^3c + \frac{1}{12}p^3.\] Finally, the third term in (\ref{size formula}) is bounded above by \[\sum_{i = 1}^{p - 1} \left(i - \frac{p - 1}{2}\right)b_i < \sum_{i = 1}^{p - 1} i\cdot ip < \frac{1}{3}p^4.\] Combining these bounds and using (\ref{size formula}), we have \[|\Lambda_p| < \frac{1}{24}p^6  - \frac{1}{120}p^5\sqrt{p} + \frac{1}{8}p^2c^2 + \frac{5}{8}p^5 + \frac{1}{2}p^3c + \frac{1}{3}p^4+ \frac{1}{12}p^3.\] Using the bound $c < 4p\sqrt{p}$ then gives \[|\Lambda_p| < \frac{1}{24}p^6 - \frac{1}{120}p^5\sqrt{p} + \frac{23}{8}p^5 + 2p^4\sqrt{p} + \frac{1}{3}p^4 + \frac{1}{12}p^3 < \frac{1}{24}p^6 - \frac{1}{200}p^5\sqrt{p},\] where the second inequality holds as $p > 10^6$. 
\end{proof}


\end{document}